\documentclass[12pt]{article}

\usepackage[T1]{fontenc}
\usepackage[latin2]{inputenc}
\usepackage{lmodern}
\usepackage{amssymb}
\usepackage{stmaryrd}

\newtheorem{thm}{Theorem}

\begin{document}

\newcommand*{\R}{\ensuremath{\mathbb{R}}}
\newcommand*{\Om}{\ensuremath{\Omega}}
\newcommand*{\Oxy}{\ensuremath{\Omega(x,y)}}
\newcommand*{\poOxy}{\ensuremath{\partial_1\Omega(x,y)}}
\newcommand*{\ptOxy}{\ensuremath{\partial_2\Omega(x,y)}}
\newcommand*{\dtuk}{\ensuremath{\frac{\partial u_k}{\partial t}}}
\newcommand*{\dxiuk}{\ensuremath{\frac{\partial u_k}{\partial x_i}}}
\newcommand*{\dxiui}{\ensuremath{\frac{\partial u_i}{\partial x_i}}}
\newcommand*{\dxkuk}{\ensuremath{\frac{\partial u_k}{\partial x_k}}}
\newcommand*{\rtx}{\ensuremath{\rho (t,x)}}
\newcommand*{\dv}{\ensuremath{\mathrm{div}}}
\newcommand*{\grad}{\ensuremath{\mathrm{grad}}}
\newcommand*{\sqr}{\ensuremath{\square}}
\newcommand*{\rar}{\ensuremath{\ \Rightarrow\ }}
\newcommand*{\lar}{\ensuremath{\ \Leftarrow\ }}
\newcommand*{\lrar}{\ensuremath{\ \Leftrightarrow\ }}
\newcommand*{\lb}{\left(}
\newcommand*{\rb}{\right)}
\newcommand*{\Dom}{\ensuremath{\mathrm{Dom}}}
\newcommand*{\tni}{\ensuremath{\mathrm{int}}}

\title{Solution of a Simple Case of the Navier-Stokes Equations via Employing the
Lambert W Function}
\author{J\'ozsef Vass\\E\"otv\"os Lor\'and University of Sciences, Budapest,
Hungary\\vass@cs.elte.hu}
\date{11-26-2003}
\maketitle

\begin{abstract}
The purpose of this paper is to introduce a curious function of two variables,
expressable via the employment of the Lambert W Funtion, which can be
generalized to satisfy Euler's
Equation of Inviscid Motion over a specific domain, with pressure independent of space
variables.
\end{abstract}

The significance of the Navier-Stokes equations is well known, and the
questions related to it, which seem to be unanswerable with our current knowledge of
mathematics. For more details see \cite{it_4}.

I am convinced that all ideas, however
small or seemingly insignificant, may contribute to some first important steps in
answering those questions. For this reason, I considered it important to publish
the simple results contained in this paper.

In recent years,
researchers in various applied and pure fields of mathematics have begun to take
notice of a function called the Lambert W Function. Many mathematicians, including me,
are convinced, that this function should have its proper place among the basic
elementary functions, and it is indeed likely to take that place in the 21st century.
The basis of this view is provided by the ever increasing number of reported
applications of this function from fields such as combinatorics, analysis of
algorithms, asymptotic roots of trinomials, jet fuel problems,
combustion problems, enzyme kinetics, explanation of an anomaly in the calculation of
exchange forces, similarity solution for the Richards
Equation, Volterra Equations for population growth,
epidemics and components. Many of these applications were
found simply because the Lambert W function has fortunately been implemented in Maple.
For more details see \cite{it_1, it_2, it_3}.\\
\\
Let us denote
\[ \Om(x,y):= \frac{y}{x}-W\lb-\frac{1}{x}e^{\frac{y}{x}}\rb \]
where $W$ denotes the principal branch of the Lambert W Function \cite{it_2}, meaning
\[ w=W(x) \lrar x=we^w, w\in\R, w\ge -1 \]
The domain of \Om\ depends on the domain of $W$, so we have the following proposition.\\

\newpage
\noindent
\textbf{Proposition 1}
\[ \Dom(\Om)=\left\{ (x,y)\in\R^2:\ x<0\ \mathrm{or}\ \left( x>0\ \mathrm{and}\ y\le
x\ln\frac{x}{e}\right)\right\} \]
\\
\textbf{Proof}\ \ $\Dom(W)=\{x\in\R:\ x\ge -\frac{1}{e}\}$, so we need
\[ -\frac{1}{x}e^{\frac{y}{x}}\ge-\frac{1}{e} \lrar
\frac{1}{x}e^{\frac{y}{x}}\le\frac{1}{e} \]
Case of $x<0:\ e^{\frac{y}{x}}\ge\frac{1}{e}x$, which is true for all $y$.\\
Case of $x>0$:
\[ e^{\frac{y}{x}}\le\frac{1}{e}x \lrar \frac{y}{x}\le \ln\frac{x}{e} \lrar
y\le x\ln\frac{x}{e}\ \ \sqr \]
\\
The following lemma shows that $\Om$ describes the places of intersection
of linear functions and the exponential function.\\
\\
\textbf{Lemma}
\[ e^{\Oxy}=x\ \Oxy -y\ \ ((x,y)\in \Dom(\Om)) \]
\[ \partial_1\Oxy + \Oxy\cdot\partial_2\Oxy = 0\ \ ((x,y)\in \tni\ \Dom(\Om)) \]
\textbf{Proof}\\
Property 1:
\[-\frac{1}{x}e^{\frac{y}{x}}=W\lb
-\frac{1}{x}e^{\frac{y}{x}}\rb\exp\lb W\lb-\frac{1}{x}e^{\frac{y}{x}}\rb\rb
\rar\]
\[\exp\lb\frac{y}{x}-W\lb-\frac{1}{x}e^{\frac{y}{x}}\rb\rb=-x
W\lb-\frac{1}{x}e^{\frac{y}{x}}\rb=x\lb\frac{y}{x}-W\lb-\frac{1}{x}e^{\frac{y}{x}}\rb\rb-y
\]
Property 2:
\[e^{\Oxy} \partial_1 \Oxy = \Oxy + x\poOxy\]
\[e^{\Oxy} \ptOxy = x\ptOxy -1\]
\[ \rar (\poOxy + \Oxy\cdot\ptOxy)(e^{\Oxy} -x) = 0 \rar\]
\[ (a)\ \Oxy = \ln x\ \ \mathrm{or}\ \ (b)\ \poOxy + \Oxy\cdot\ptOxy = 0 \]
\[ (a) \lrar \frac{y}{x}-W\lb-\frac{1}{x}e^{\frac{y}{x}}\rb=\ln x \lrar
e^{\frac{y}{x}}=xe^{W(-\frac{1}{x}e^{\frac{y}{x}})} \lrar \]
\[ W\lb-\frac{1}{x}e^{\frac{y}{x}}\rb e^{W(-\frac{1}{x}e^{\frac{y}{x}})} =
-\frac{1}{x}e^{\frac{y}{x}} = (-1)e^{W(-\frac{1}{x}e^{\frac{y}{x}})} \lrar\]
\[ W\lb-\frac{1}{x}e^{\frac{y}{x}}\rb = -1 \lrar
-\frac{1}{x}e^{\frac{y}{x}}=-\frac{1}{e} \lrar
 y=x\ln\frac{x}{e} \rar (x,y)\notin \tni\ \Dom(\Om) \]
\sqr\\
\\
We now investigate where some special values of the $\Om$ function occur.\\
\\
\textbf{Proposition 2}\\
\[ \Oxy=0\ \lrar\ y=-1\ \ ((x,y)\in \Dom(\Om)) \]
\[ \Oxy=\ln x\ \lrar\ y=x\ln\frac{x}{e}\ \ ((x,y)\in \Dom(\Om)) \]
\[ \Oxy = C + \ln y\ \lrar\ y =
-\frac{x}{1+e^C}W\lb-(1+e^{-C})\frac{1}{x}\rb\ \ ((x,y)\in \Dom(\Om),\ C\in\R) \]
\textbf{Proof}\\
Property 1:
\[ \frac{y}{x}=W(-\frac{1}{x}e^{\frac{y}{x}}) \lrar
\frac{y}{x}e^{\frac{y}{x}}=W^{-1}(\frac{y}{x})=-\frac{1}{x}e^{\frac{y}{x}} \lrar
y=-1 \]
Property 2: See in the Lemma, in the proof of Property 2.\\
Property 3:
\[ C+\ln y = \frac{y}{x}-W\lb-\frac{1}{x}e^{\frac{y}{x}}\rb \lrar
-\frac{1}{x}e^{\frac{y}{x}}=\lb\frac{y}{x}-C-\ln y\rb\exp\lb\frac{y}{x}-C-\ln y\rb
\lrar \]
\[ -y = \frac{1}{e^C}(y-Cx-x\ln y) \lrar \lb 1+\frac{1}{e^C}\rb y =
\frac{x}{e^C}(C+\ln y)
\lrar \]
\[ \exp\lb (e^C +1)\frac{y}{x}\rb = e^C y \lrar -\frac{1+e^C}{e^C x} =
-(1+e^C)\frac{y}{x}\exp\lb -(1+e^C)\frac{y}{x}\rb \lrar \]
\[ W\lb -(1+e^{-C})\frac{1}{x}\rb = -(1+e^C)\frac{y}{x} \lrar y =
-\frac{x}{1+e^C}W\lb -(1+e^{-C})\frac{1}{x}\rb\ \sqr\]
\\
Let us now look at limits of $\Om$ at zero taken from the left and right (denoted by
$x\to 0^{\pm}$), as well as in infinity.\\
\\
\textbf{Proposition 3}
\[ \lim_{x\to 0^-} \Oxy = -\infty\ (y\ge 0),\ \ \lim_{x\to 0^-} \Oxy = \ln(-y)\ (y < 0) \]
\[ \lim_{x\to 0^+} \Oxy = -\infty\ (y < 0),\ \ \lim_{x\to\pm\infty} \Oxy= 0\ (y\in\R) \]
\\
\textbf{Proof}\ \ Let $y$ be any positive number.
\[ \lim_{x\to 0^-} -\frac{1}{x}e^{\frac{y}{x}} = \lim_{x\to +\infty} \frac{x}{e^{yx}}
= 0 \rar \lim_{x\to 0^-} \Oxy = -\infty -0 \]
In the case of $y=0$ we have
\[ \lim_{x\to 0^-} \Om(x,0) = \lim_{x\to 0^-} -W\lb -\frac{1}{x}\rb = -\infty \]
Now, let $y$ denote any negative number.
\[ \lim_{x\to 0^-} -\frac{1}{x}e^{\frac{y}{x}} = +\infty \rar
 W\lb-\frac{1}{x}e^{\frac{y}{x}}\rb = \ln\lb -\frac{1}{x}e^{\frac{y}{x}}\rb - \ln
W\lb-\frac{1}{x}e^{\frac{y}{x}}\rb \lrar \]
\[ \Oxy =
\ln\lb-xW\lb-\frac{1}{x}e^{\frac{y}{x}}\rb\rb \]
if $|x|$ is small enough. So
\[ \lim_{x\to 0^-} \Oxy = \lim_{x\to 0^-}
\ln\lb-xW\lb-\frac{1}{x}e^{\frac{y}{x}}\rb\rb \]
\[ \lim_{x\to 0^-} -x W\lb-\frac{1}{x} e^{\frac{y}{x}}\rb = \lim_{x\to +\infty}
\frac{W(xe^{-yx})}{x} = \]
\[ =\lim_{x\to +\infty}
\frac{W(xe^{-yx})}{xe^{-yx}(1+W(xe^{-yx}))}e^{-yx}(1-yx) = -y \]
by l'Hopital's Rule.
\[ \lim_{x\to 0^+} -\frac{1}{x}e^{\frac{y}{x}} = \lim_{x\to +\infty} -xe^{yx} = 0 \rar
 \lim_{x\to 0^+} \Oxy = \lim_{x\to 0^+} \frac{y}{x} - W(0) = -\infty \]
Now, let $y$ be any real number.
\[ \lim_{x\to\pm\infty} \Oxy = 0 - W(0) = 0 \]
\sqr\\
\\
We now turn to the main result of this paper. Let us denote
\[ u_k(t,x):=\Om(t,x_k)\ \ (k=1,\dots,n),\ \ u:=(u_1,\dots,u_n) \]
where
\[ \Dom(u):=\bigcap_{j=1}^n \Dom(u_j) \]
where
\[ \Dom(u_j):=\left\{ (t,x)\in\R^{n+1}:\ t<0\ \mathrm{or}\ \lb t>0\ \mathrm{and}\
x_j\le t\ln\frac{t}{e}\rb\right\} \]

\begin{thm}
\[ \dtuk + \sum_{i=1}^n u_i \dxiuk = 0\ \ (\mathrm{on}\ \tni\ \Dom(u),
\ k=1,\dots,n) \]
\end{thm}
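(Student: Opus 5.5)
The plan is to reduce the theorem to Property 2 of the Lemma, using the fact that $u_k$ depends only on $t$ and on the single coordinate $x_k$.

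First I will check that each point of $\tni\ \Dom(u)$ gives an interior point of $\Dom(\Om)$ in the relevant two variables. The set $\Dom(u)$ is a finite intersection, so its interior is the intersection of the interiors of the sets $\Dom(u_j)$. Hence $(t,x)\in\tni\ \Dom(u)$ lies in $\tni\ \Dom(u_k)$ for every $k$. Now $\Dom(u_k)$ is exactly the cylinder $\{(t,x):(t,x_k)\in\Dom(\Om)\}$. So an open ball around $(t,x)$ inside $\Dom(u_k)$ projects onto an open disc around $(t,x_k)$ inside $\Dom(\Om)$. Therefore $(t,x_k)\in\tni\ \Dom(\Om)$.

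Next I need differentiability at such points, so that the chain rule applies and the Lemma's identity makes sense. Away from $x=0$ (automatic in the interior), $\Om$ is differentiable on $\tni\ \Dom(\Om)$. The reason is that the argument $-\frac{1}{t}e^{y/t}$ of $W$ stays strictly above $-1/e$ there. The boundary $y=t\ln\frac{t}{e}$ is exactly where this argument equals $-1/e$, as shown in the proofs of Proposition 1 and the Lemma. Since $W$ is differentiable on $(-1/e,\infty)$, $\Om$ is differentiable at these points.

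Then I compute the derivatives. Since $u_k(t,x)=\Om(t,x_k)$, we have
\[ \dtuk=\partial_1\Om(t,x_k), \qquad \dxiuk=\delta_{ik}\,\partial_2\Om(t,x_k). \]
So the sum $\sum_i u_i\dxiuk$ collapses to the single term $u_k\,\partial_2\Om(t,x_k)=\Om(t,x_k)\,\partial_2\Om(t,x_k)$. The left-hand side of the theorem thus equals
\[ \partial_1\Om(t,x_k)+\Om(t,x_k)\,\partial_2\Om(t,x_k), \]
which vanishes by Property 2 of the Lemma applied at $(t,x_k)\in\tni\ \Dom(\Om)$. This holds for every $k$.

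The only step needing any care is the interior/projection argument in the first step. The rest is a direct substitution into the Lemma.
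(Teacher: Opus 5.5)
Your proof is correct and follows the paper's argument: since $u_k$ depends only on $t$ and $x_k$, the sum collapses to $u_k\,\partial u_k/\partial x_k$, and the left-hand side becomes $\partial_1\Om(t,x_k)+\Om(t,x_k)\,\partial_2\Om(t,x_k)$, which vanishes by Property 2 of the Lemma. Your extra checks are also correct and fill in details the paper leaves implicit: that $(t,x_k)$ lies in $\tni\ \Dom(\Om)$, and that $\Om$ is differentiable there because the argument of $W$ stays strictly above $-1/e$.
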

\textbf{Proof}
\[ \dtuk(t,x) + \sum_{i=1}^n u_i(t,x)\dxiuk(t,x) = \dtuk(t,x) + u_k(t,x)\dxkuk(t,x) = \]
\[ = \partial_1\Om(t,x_k) + \Om(t,x_k)\partial_2\Om(t,x_k) = 0 \]
by our Lemma. \sqr\\
\\
This result is equivalent to stating that our $u=(u_1,\dots,u_n)$ mapping satisfies
Euler's Equation of Inviscid Motion over a specified domain, with the pressure
function independent of space variables, which is really just a special case of the
Navier-Stokes Equations. From the derivatives of the $\Om$ function, we see
that our mapping $u$ will not satisfy the incompressibility requirement
\[ \mathrm{div}\ u = \sum_{i=1}^n \dxiui = 0 \]
generally included in the Navier-Stokes Equations. I feel that I must also state, I am
not entirely convinced that this requirement is really a basic characteristic of
fluids in nature. We may also investigate however, the interesting question of whether
we can find a function $\rho$, for which the law of mass conservation would
hold, with our mapping $u$ defined above. Let us denote
\[ \rtx := \prod_{k=1}^n \frac{1}{e^{\Om (t,x_k)}-t},\ \ \Dom(\rho):=\tni\ \Dom(u) \]

\begin{thm}
\[ \frac{\partial \rho}{\partial t} + \dv(\rho u) = 0\ \ (\mathrm{on}\ \tni\ \Dom(u)) \]
\end{thm}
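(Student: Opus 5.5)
The plan is to reduce the $n$-dimensional continuity equation to a one-dimensional identity in the two variables of $\Om$, and then obtain that identity by differentiating the second property of our Lemma with respect to the second variable.

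\emph{Identifying the factors of $\rho$.} First I would note that on $\tni\ \Dom(u)$ each factor of $\rho$ is well defined. The proof of Property 2 of the Lemma shows that $e^{\Om(t,x_k)}=t$ happens only on the curve $x_k=t\ln\frac{t}{e}$, which is not in the interior. From the identity $e^{\Oxy}\ptOxy=x\ptOxy-1$ obtained in that proof, I get
\[ \partial_2\Om(t,x_k)=-\frac{1}{e^{\Om(t,x_k)}-t}, \]
so $\rho_k(t,x_k):=\frac{1}{e^{\Om(t,x_k)}-t}=-\partial_2\Om(t,x_k)$. Hence $\rho=\prod_k\rho_k$, and each factor depends only on $(t,x_k)$.

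\emph{Splitting the equation.} Since $u_k=\Om(t,x_k)$ also depends only on $(t,x_k)$, the product rule gives
\[ \frac{\partial\rho}{\partial t}+\dv(\rho u)=\sum_{k=1}^n\Big(\prod_{j\ne k}\rho_j\Big)\Big(\frac{\partial\rho_k}{\partial t}+\frac{\partial(\rho_k u_k)}{\partial x_k}\Big). \]
Here I use that $\frac{\partial}{\partial x_k}$ does not act on the factors $\rho_j$ with $j\ne k$. It therefore suffices to show, for points in the interior of $\Dom(\Om)$,
\[ \partial_1\big(-\partial_2\Om\big)+\partial_2\big(\Om\cdot(-\partial_2\Om)\big)=0 . \]

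\emph{The one-dimensional identity.} Assuming the mixed partials commute, the left-hand side equals $-\partial_2\big(\partial_1\Om+\Om\,\partial_2\Om\big)$. The bracket vanishes identically on the open set $\tni\ \Dom(\Om)$ by Property 2 of the Lemma, so its $\partial_2$-derivative is zero as well.

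\emph{Main obstacle.} The only delicate point is justifying that $\Om$ is $C^2$ on $\tni\ \Dom(\Om)$, which is what allows the exchange $\partial_1\partial_2\Om=\partial_2\partial_1\Om$. I would argue this from analyticity. $W$ is real-analytic on $(-\frac{1}{e},\infty)$, since $we^w$ has nonzero derivative $e^w(1+w)$ for $w>-1$. In the interior we have $-\frac{1}{x}e^{y/x}>-\frac{1}{e}$ and $x\neq0$, so $\Om$ is a composition of smooth maps there. With that in hand, the computation closes the proof.
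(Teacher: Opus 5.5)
Your proof is correct, and it takes a genuinely different route from the paper's.

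The paper verifies the equation by direct first-order computation. It differentiates $\rho=\prod_k (e^{\Omega(t,x_k)}-t)^{-1}$ with the chain rule, then uses Property 2 of the Lemma to cancel the terms carrying the factor $e^{\Omega}(\partial_1\Omega+\Omega\,\partial_2\Omega)$. What remains is $\rho\sum_i (e^{\Omega(t,x_i)}-t)^{-1}$, which it identifies with $-\rho\,\mathrm{div}\,u$. That last step silently uses the same identity $\partial_2\Omega=-1/(e^{\Omega}-x)$ that you state at the outset.

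You instead use that identity to recognize $\rho=\prod_k\bigl(-\partial_2\Omega(t,x_k)\bigr)$, i.e.\ $\rho=(-1)^n\det D_x u$. This lets you decouple the problem into $n$ one-dimensional continuity equations, and you obtain each one by differentiating the Burgers identity $\partial_1\Omega+\Omega\,\partial_2\Omega=0$ in the space variable.

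Your route buys an explanation of why this $\rho$ works. The choice has nothing to do with the Lambert $W$ function: for any sufficiently smooth solution of $v_t+v\,v_y=0$, the derivative $v_y$ satisfies $(v_y)_t+(v\,v_y)_y=0$. Hence a product of such factors, one per coordinate, always solves the $n$-dimensional continuity equation.

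The cost is one extra order of regularity, namely symmetry of the mixed second partials, whereas the paper only ever differentiates $\Omega$ once. Your analyticity argument for $W$ on $(-\frac{1}{e},\infty)$ handles this cleanly. It also supplies the differentiability of $\Omega$ that the paper's Lemma takes for granted.
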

\textbf{Proof}
\[ \frac{\partial\rho}{\partial t} (t,x) = \rtx\sum_{i=1}^n
\frac{-1}{e^{\Om(t,x_i)}-t} (\partial_1 \Om (t,x_i) e^{\Om(t,x_i)} -1) \]
\[ \frac{\partial\rho}{\partial x_k} (t,x) = \rho(t,x)\frac{-1}{e^{\Om(t,x_k)}-t}
\partial_2 \Om (t,x_k) e^{\Om(t,x_k)} \]
\[ \lb \frac{\partial \rho}{\partial t} + \langle u,\grad\ \rho\rangle \rb (t,x) = \]
\[ = \rtx\sum_{i=1}^n \frac{1}{e^{\Om(t,x_i)}-t} + \rtx\sum_{i=1}^n
\frac{-1}{e^{\Om(t,x_i)}-t} e^{\Om(t,x_i)}
(\partial_1\Om(t,x_i)+\Om(t,x_i)\partial_2\Om(t,x_i)) = \]
\[ =(-\rho \dv\ u)(t,x) \]
\sqr

\vspace{2cm}

\noindent
I would like to express my gratitude to Prof. Gisbert Stoyan and Istv\'an Sigray for their
helpful suggestions and comments regarding this paper.

\end{document}